\setlist[enumerate,1]{label={(\arabic*)},itemsep=\parskip} 
\setlist[itemize,1]{itemsep=\parskip} 
\newlist{thmlist}{enumerate}{2}
\setlist[thmlist,1]{label={\em(\roman*)},ref={(\roman*)},%
  itemsep=\parskip,leftmargin=*,align=left}
\setlist[thmlist,2]{label={\em(\alph*)},ref={(\alph*)},%
  itemsep=\parskip,leftmargin=*,align=left,topsep=0.1cm}
\newlist{remlist}{enumerate}{2}
\setlist[remlist,1]{label={(\roman*)},ref={(\roman*)},itemsep=\parskip,%
  leftmargin=*,align=left}
\setlist[remlist,2]{label={(\alph*)},ref={(\alph*)},itemsep=\parskip,%
  leftmargin=*,align=left,topsep=0.1cm}
\let\c@equation\c@subsubsection
\newtheorem{cor}{Corollary}
\newtheorem{lem}[cor]{Lemma}
\newtheorem{prop}[cor]{Proposition}
\newtheorem{thm}[cor]{Theorem}
\newtheorem*{claim*}{Claim}
\theoremstyle{definition}
\newtheorem{rem}[cor]{Remark}
\newtheorem{constr}[cor]{Construction}
\renewcommand{\eqref}[1]{(\ref{#1})}
\newcommand{\nc}{\newcommand}
\nc{\renc}{\renewcommand}
\nc{\ssec}{\subsection}
\nc{\sssec}{\subsubsection}
\nc{\on}{\operatorname}
\nc{\term}[1]{#1\xspace}
\newcommand{\p}{\mathfrak{p}}
\DeclareMathSymbol{A}{\mathalpha}{operators}{`A}
\DeclareMathSymbol{B}{\mathalpha}{operators}{`B}
\DeclareMathSymbol{C}{\mathalpha}{operators}{`C}
\DeclareMathSymbol{D}{\mathalpha}{operators}{`D}
\DeclareMathSymbol{E}{\mathalpha}{operators}{`E}
\DeclareMathSymbol{F}{\mathalpha}{operators}{`F}
\DeclareMathSymbol{G}{\mathalpha}{operators}{`G}
\DeclareMathSymbol{H}{\mathalpha}{operators}{`H}
\DeclareMathSymbol{I}{\mathalpha}{operators}{`I}
\DeclareMathSymbol{J}{\mathalpha}{operators}{`J}
\DeclareMathSymbol{K}{\mathalpha}{operators}{`K}
\DeclareMathSymbol{L}{\mathalpha}{operators}{`L}
\DeclareMathSymbol{M}{\mathalpha}{operators}{`M}
\DeclareMathSymbol{N}{\mathalpha}{operators}{`N}
\DeclareMathSymbol{O}{\mathalpha}{operators}{`O}
\DeclareMathSymbol{P}{\mathalpha}{operators}{`P}
\DeclareMathSymbol{Q}{\mathalpha}{operators}{`Q}
\DeclareMathSymbol{R}{\mathalpha}{operators}{`R}
\DeclareMathSymbol{S}{\mathalpha}{operators}{`S}
\DeclareMathSymbol{T}{\mathalpha}{operators}{`T}
\DeclareMathSymbol{U}{\mathalpha}{operators}{`U}
\DeclareMathSymbol{V}{\mathalpha}{operators}{`V}
\DeclareMathSymbol{W}{\mathalpha}{operators}{`W}
\DeclareMathSymbol{X}{\mathalpha}{operators}{`X}
\DeclareMathSymbol{Y}{\mathalpha}{operators}{`Y}
\DeclareMathSymbol{Z}{\mathalpha}{operators}{`Z}
\nc{\sA}{\ensuremath{\mathcal{A}}\xspace}
\nc{\sB}{\ensuremath{\mathcal{B}}\xspace}
\nc{\sC}{\ensuremath{\mathcal{C}}\xspace}
\nc{\sD}{\ensuremath{\mathcal{D}}\xspace}
\nc{\sE}{\ensuremath{\mathcal{E}}\xspace}
\nc{\sF}{\ensuremath{\mathcal{F}}\xspace}
\nc{\sG}{\ensuremath{\mathcal{G}}\xspace}
\nc{\sH}{\ensuremath{\mathcal{H}}\xspace}
\nc{\sI}{\ensuremath{\mathcal{I}}\xspace}
\nc{\sJ}{\ensuremath{\mathcal{J}}\xspace}
\nc{\sK}{\ensuremath{\mathcal{K}}\xspace}
\nc{\sL}{\ensuremath{\mathcal{L}}\xspace}
\nc{\sM}{\ensuremath{\mathcal{M}}\xspace}
\nc{\sN}{\ensuremath{\mathcal{N}}\xspace}
\nc{\sO}{\ensuremath{\mathcal{O}}\xspace}
\nc{\sP}{\ensuremath{\mathcal{P}}\xspace}
\nc{\sQ}{\ensuremath{\mathcal{Q}}\xspace}
\nc{\sR}{\ensuremath{\mathcal{R}}\xspace}
\nc{\sS}{\ensuremath{\mathcal{S}}\xspace}
\nc{\sT}{\ensuremath{\mathcal{T}}\xspace}
\nc{\sU}{\ensuremath{\mathcal{U}}\xspace}
\nc{\sV}{\ensuremath{\mathcal{V}}\xspace}
\nc{\sW}{\ensuremath{\mathcal{W}}\xspace}
\nc{\sX}{\ensuremath{\mathcal{X}}\xspace}
\nc{\sY}{\ensuremath{\mathcal{Y}}\xspace}
\nc{\sZ}{\ensuremath{\mathcal{Z}}\xspace}
\nc{\bA}{\ensuremath{\mathbf{A}}\xspace}
\nc{\bB}{\ensuremath{\mathbf{B}}\xspace}
\nc{\bC}{\ensuremath{\mathbf{C}}\xspace}
\nc{\bD}{\ensuremath{\mathbf{D}}\xspace}
\nc{\bE}{\ensuremath{\mathbf{E}}\xspace}
\nc{\bF}{\ensuremath{\mathbf{F}}\xspace}
\nc{\bG}{\ensuremath{\mathbf{G}}\xspace}
\nc{\bH}{\ensuremath{\mathbf{H}}\xspace}
\nc{\bI}{\ensuremath{\mathbf{I}}\xspace}
\nc{\bJ}{\ensuremath{\mathbf{J}}\xspace}
\nc{\bK}{\ensuremath{\mathbf{K}}\xspace}
\nc{\bL}{\ensuremath{\mathbf{L}}\xspace}
\nc{\bM}{\ensuremath{\mathbf{M}}\xspace}
\nc{\bN}{\ensuremath{\mathbf{N}}\xspace}
\nc{\bO}{\ensuremath{\mathbf{O}}\xspace}
\nc{\bP}{\ensuremath{\mathbf{P}}\xspace}
\nc{\bQ}{\ensuremath{\mathbf{Q}}\xspace}
\nc{\bR}{\ensuremath{\mathbf{R}}\xspace}
\nc{\bS}{\ensuremath{\mathbf{S}}\xspace}
\nc{\bT}{\ensuremath{\mathbf{T}}\xspace}
\nc{\bU}{\ensuremath{\mathbf{U}}\xspace}
\nc{\bV}{\ensuremath{\mathbf{V}}\xspace}
\nc{\bW}{\ensuremath{\mathbf{W}}\xspace}
\nc{\bX}{\ensuremath{\mathbf{X}}\xspace}
\nc{\bY}{\ensuremath{\mathbf{Y}}\xspace}
\nc{\bZ}{\ensuremath{\mathbf{Z}}\xspace}
\nc{\dA}{\ensuremath{\mathds{A}}\xspace}
\nc{\dB}{\ensuremath{\mathds{B}}\xspace}
\nc{\dC}{\ensuremath{\mathds{C}}\xspace}
\nc{\dD}{\ensuremath{\mathds{D}}\xspace}
\nc{\dE}{\ensuremath{\mathds{E}}\xspace}
\nc{\dF}{\ensuremath{\mathds{F}}\xspace}
\nc{\dG}{\ensuremath{\mathds{G}}\xspace}
\nc{\dH}{\ensuremath{\mathds{H}}\xspace}
\nc{\dI}{\ensuremath{\mathds{I}}\xspace}
\nc{\dJ}{\ensuremath{\mathds{J}}\xspace}
\nc{\dK}{\ensuremath{\mathds{K}}\xspace}
\nc{\dL}{\ensuremath{\mathds{L}}\xspace}
\nc{\dM}{\ensuremath{\mathds{M}}\xspace}
\nc{\dN}{\ensuremath{\mathds{N}}\xspace}
\nc{\dO}{\ensuremath{\mathds{O}}\xspace}
\nc{\dP}{\ensuremath{\mathds{P}}\xspace}
\nc{\dQ}{\ensuremath{\mathds{Q}}\xspace}
\nc{\dR}{\ensuremath{\mathds{R}}\xspace}
\nc{\dS}{\ensuremath{\mathds{S}}\xspace}
\nc{\dT}{\ensuremath{\mathds{T}}\xspace}
\nc{\dU}{\ensuremath{\mathds{U}}\xspace}
\nc{\dV}{\ensuremath{\mathds{V}}\xspace}
\nc{\dW}{\ensuremath{\mathds{W}}\xspace}
\nc{\dX}{\ensuremath{\mathds{X}}\xspace}
\nc{\dY}{\ensuremath{\mathds{Y}}\xspace}
\nc{\dZ}{\ensuremath{\mathds{Z}}\xspace}
\nc{\bbA}{\ensuremath{\mathbb{A}}\xspace}
\nc{\bbB}{\ensuremath{\mathbb{B}}\xspace}
\nc{\bbC}{\ensuremath{\mathbb{C}}\xspace}
\nc{\bbD}{\ensuremath{\mathbb{D}}\xspace}
\nc{\bbE}{\ensuremath{\mathbb{E}}\xspace}
\nc{\bbF}{\ensuremath{\mathbb{F}}\xspace}
\nc{\bbG}{\ensuremath{\mathbb{G}}\xspace}
\nc{\bbH}{\ensuremath{\mathbb{H}}\xspace}
\nc{\bbI}{\ensuremath{\mathbb{I}}\xspace}
\nc{\bbJ}{\ensuremath{\mathbb{J}}\xspace}
\nc{\bbK}{\ensuremath{\mathbb{K}}\xspace}
\nc{\bbL}{\ensuremath{\mathbb{L}}\xspace}
\nc{\bbM}{\ensuremath{\mathbb{M}}\xspace}
\nc{\bbN}{\ensuremath{\mathbb{N}}\xspace}
\nc{\bbO}{\ensuremath{\mathbb{O}}\xspace}
\nc{\bbP}{\ensuremath{\mathbb{P}}\xspace}
\nc{\bbQ}{\ensuremath{\mathbb{Q}}\xspace}
\nc{\bbR}{\ensuremath{\mathbb{R}}\xspace}
\nc{\bbS}{\ensuremath{\mathbb{S}}\xspace}
\nc{\bbT}{\ensuremath{\mathbb{T}}\xspace}
\nc{\bbU}{\ensuremath{\mathbb{U}}\xspace}
\nc{\bbV}{\ensuremath{\mathbb{V}}\xspace}
\nc{\bbW}{\ensuremath{\mathbb{W}}\xspace}
\nc{\bbX}{\ensuremath{\mathbb{X}}\xspace}
\nc{\bbY}{\ensuremath{\mathbb{Y}}\xspace}
\nc{\bbZ}{\ensuremath{\mathbb{Z}}\xspace}
\nc{\mrm}[1]{\ensuremath{\mathrm{#1}}\xspace}
\nc{\mbf}[1]{\ensuremath{\mathbf{#1}}\xspace}
\nc{\mcal}[1]{\ensuremath{\mathcal{#1}}\xspace}
\nc{\msc}[1]{\ensuremath{\mathscr{#1}}\xspace}
\renc{\bar}[1]{\overline{#1}}
\let\S\relax
\nc{\sub}{\subset}
\nc{\too}{\longrightarrow}
\nc{\hook}{\hookrightarrow}
\nc*{\hooklongrightarrow}{\ensuremath{\lhook\joinrel\relbar\joinrel\rightarrow}}
\nc{\hooklong}{\hooklongrightarrow}
\nc{\twoheadlongrightarrow}{\relbar\joinrel\twoheadrightarrow}
\nc{\shiso}{\approx}
\nc{\isoto}{\xrightarrow{\sim}}
\nc{\isofrom}{\xleftarrow{\sim}}
\renc{\ge}{\geqslant}
\renc{\le}{\leqslant}
\renc{\geq}{\geqslant}
\renc{\leq}{\leqslant}
\nc{\id}{\mathrm{id}}
\DeclareMathOperator{\Ker}{\mathrm{Ker}}
\DeclareMathOperator{\Coker}{\mathrm{Coker}}
\let\Im\relax
\DeclareMathOperator{\Im}{\mathrm{Im}}
\DeclareMathOperator{\Hom}{\mathrm{Hom}}
\nc{\uHom}{\underline{\smash{\Hom}}}
\DeclareMathOperator{\End}{\mathrm{End}}
\nc{\Pre}{\mathrm{PSh}{}}
\nc{\Shv}{\mathrm{Shv}{}}
\nc{\uEnd}{\underline{\smash{\End}}}
\renc{\lim}{\operatorname*{lim}}
\nc{\colim}{\operatorname*{colim}}
\nc{\Cofib}{\on{Cofib}}
\nc{\Fib}{\on{Fib}}
\nc{\initial}{\varnothing}
\nc{\op}{\mathrm{op}}
\renc{\coprod}{\sqcup}
\nc{\bDelta}{\mbf{\Delta}}
\nc{\DM}{\mbf{DM}}
\nc{\eff}{\mathrm{eff}}
\nc{\veff}{\mathrm{veff}}
\nc{\cyc}{{\mrm{cyc}}}
\nc{\corr}{{\on{corr}}}
\nc{\ft}{\mrm{ft}}
\nc{\flf}{\mrm{flf}}
\nc{\fet}{{\mrm{f\acute et}}}
\nc{\fsyn}{{\mrm{fsyn}}}
\nc{\syn}{{\mrm{syn}}}
\nc{\lci}{{\mrm{lci}}}
\nc{\Perf}{\mbf{Perf}}
\nc{\perf}{\mrm{perf}}
\nc{\oblv}{\mrm{oblv}}
\nc{\exact}{\on{exact}}
\nc{\F}{{\on{F}}}
\nc{\clopen}{{\mrm{clopen}}}
\nc{\B}{\mrm{B}}
\nc{\D}{\mrm{D}}
\nc{\Fin}{\on{Fin}}
\nc{\fin}{\mrm{fin}}
\nc{\Cut}{\on{Cut}}
\nc{\Cart}{\on{Cart}}
\nc{\pairs}{\mathsf{pairs}}
\nc{\Pairs}{\mathrm{Pair}}
\nc{\Trip}{\mathrm{Trip}}
\nc{\Lab}{\mathrm{Lab}}
\nc{\SL}{\mathrm{SL}}
\nc{\coCart}{\mathrm{coCart}}
\nc{\RKE}{\mathrm{RKE}}
\nc{\strict}{\mathrm{strict}}
\nc{\Emb}{\mathrm{Emb}}
\nc{\Split}{\mathrm{Split}}
\nc{\Set}{\mathrm{Set}}
\nc{\sSets}{\mathrm{sSets}}
\nc{\pb}{\mathrm{pb}}
\nc{\fib}{\mathrm{fib}}
\nc{\diff}{\mrm{diff}}
\nc{\gp}{\mrm{gp}}
\nc{\map}{\mrm{map}}
\nc{\mgp}{\mrm{mot-gp}}
\nc{\FSyn}{\mrm{FSyn}}
\nc{\FEt}{\mrm{FEt}}
\nc{\Spc}{\mrm{Spc}}
\nc{\Ob}{\mrm{Ob}}
\nc{\Spt}{\mrm{Spt}}
\nc{\T}{\bT}
\nc{\suspinf}{\Sigma^\infty}
\nc{\h}{\mrm{h}}
\nc{\uhom}{\underline{\mathrm{Hom}}}
\nc{\umap}{\underline{\mathrm{Maps}}}
\renc{\H}{\bH}
\nc{\Einfty}{{\sE_\infty}}
\nc{\Eone}{{\sE_1}}
\nc{\Stab}{\mrm{Stab}}
\nc{\lax}{{\mrm{lax}}}
\nc{\cocart}{{\mrm{cocart}}}
\nc{\Sch}{\on{Sch}}
\nc{\Fr}{\on{Fr}}
\nc{\A}{\mathbf{A}}
\nc{\N}{\mathbf{N}}
\nc{\Z}{\mathbf{Z}}
\nc{\Q}{\mathbf{Q}}
\nc{\Oo}{\mathcal{O}} 
\nc{\Fscr}{\mathcal{F}}
\nc{\Gscr}{\mathcal{G}}
\nc{\Ll}{\mathcal{L}} 
\nc{\Mm}{\mathcal{M}} 
\nc{\mm}{\mathrm{m}} 
\nc{\K}{\mrm{K}} 
\nc{\W}{\mrm{W}} 
\nc{\red}{{\on{red}}}
\nc{\Voev}{{\on{Voev}}}
\nc{\Corr}{\mrm{Corr}}
\nc{\Span}{\mathbf{Corr}}
\nc{\Gap}{\mrm{Gap}}
\nc{\Corrfr}{\Corr^{\fr}}
\nc{\Corrvfr}{\Corr^{\Vfr}}
\nc{\Spec}{\on{Spec}}
\nc{\Sm}{\on{Sm}}
\nc{\Gm}{\mathbf{G}_{\on{m}}}
\renc{\P}{\bP}
\nc{\nis}{\mathrm{nis}}
\nc{\zar}{\mathrm{zar}}
\nc{\et}{\mathrm{\acute et}}
\nc{\all}{\mathrm{all}}
\nc{\fold}{\mathrm{fold}}
\nc{\Fun}{\mathrm{Fun}}
\nc{\Ho}{\mathrm{Ho}}
\nc{\Segal}{\mathrm{Segal}}
\nc{\Mon}{\mrm{Mon}{}}
\nc{\Ab}{\mrm{Ab}}
\nc{\Sh}{\on{Sh}}
\nc{\M}{\mrm{M}}
\nc{\Lhtp}{L_{\A^1}}
\nc{\Lmot}{L_{\mrm{mot}}}
\nc{\mot}{\mrm{mot}}
\nc{\SH}{\mbf{SH}}
\nc{\RR}{\mbf{R}}
\nc{\CC}{\mbf{C}}
\nc{\Mod}{\mbf{Mod}}
\nc{\QCoh}{\mbf{QCoh}}
\nc{\MonUnit}{\mbf{1}}
\nc{\tr}{\on{tr}}
\nc{\cotr}{\mrm{cotr}}
\nc{\vop}{\mrm{vop}}
\nc{\fr}{{\on{fr}}}
\nc{\Ar}{\mrm{Ar}}
\nc{\Vfr}{\on{Vfr}}
\nc{\frdiff}{{\on{frdiff}}}
\nc{\frGys}{\on{frGys}}
\nc{\SHfr}{\SH^{\fr}}
\nc{\SHfrdiff}{\SH^{\frdiff}}
\nc{\SHfrGys}{\SH^{\frGys}}
\nc{\InftyCat}{(\mathrm{\infty,1)\textnormal{-}Cat}}
\nc{\TriCat}{\mathrm{TriCat}}
\nc{\oneCat}{\mathrm{1\textnormal{-}Cat}}
\nc{\Cat}{\mathrm{Cat}}
\nc{\Th}{\on{Th}}
\nc{\CMon}{\mrm{CMon}{}}
\nc{\CAlg}{\mrm{CAlg}{}}
\nc{\MGL}{\mrm{MGL}}
\nc{\Seg}{\mrm{Seg}{}}
\nc{\GW}{\mrm{GW}{}}
\nc{\Tw}{\mrm{Tw}}
\nc{\sslash}{/\mkern-6mu/}
\nc{\PrL}{\mrm{Pr}^\mrm{L}}
\nc{\PrR}{\mrm{Pr}^\mrm{R}}
\nc{\pr}{\mrm{pr}}
\let\phi\varphi
\nc\efr{\mrm{efr}}
\nc\nfr{\mrm{nfr}}
\nc\dfr{\mrm{fr}}
\nc\tfr{\mrm{tfr}}
\nc\Vect{\mrm{Vect}}
\nc\sVect{\mrm{sVect}}
\nc{\fix}{\mrm{fix}}
\nc{\ho}{\mrm{h}}
\nc\Mfd{\mrm{Mfd}}
\nc{\PSh}{\mrm{PSh}}
\nc{\hzmw}{H \tilde\Z{}}
\nc{\Cor}{\mrm{Cor}{}}
\nc{\cormw}{\mrm{\widetilde{Cor}}{}}
\nc{\Chw}{\mrm{\widetilde{CH}}{}}
\nc{\Ex}{\mrm{Ex}}
\nc{\BM}{\mrm{BM}}
\nc{\Pic}{\mrm{Pic}}
\nc{\Br}{\mrm{Br}}
\nc{\pur}{\mathfrak p}
\nc{\angles}[1]{\langle #1\rangle}
\nc{\inv}[1]{[\tfrac{1}{#1}]}
\nc{\pinv}{\inv{p}}
\nc{\cinv}{\inv{p}}
\nc{\Sph}{\on{Sph}}
\nc{\KGL}{\mrm{KGL}}
\nc{\KH}{\mrm{KH}}
\nc{\Flag}{\mrm{Flag}}
\nc{\Pro}{\mrm{Pro}}
\nc{\Frac}{\mrm{Frac}}
\nc{\arc}{\mrm{arc}}
\nc{\rarc}{\mrm{rarc}}
\nc{\cdarc}{\mrm{cdarc}}
\nc{\vv}{\mrm{v}}
\nc{\rv}{\mrm{rv}}
\nc{\cdv}{\mrm{cdv}}
\nc{\hh}{\mrm{h}}
\nc{\cdh}{\mrm{cdh}}
\nc{\rh}{\mathrm{rh}}
\nc{\Et}{\mathrm{Et}}
\nc{\Nis}{\mathrm{Nis}}
\nc{\Zar}{\mathrm{Zar}}
\nc{\cdp}{\mathrm{cdp}}
\nc{\RZ}{\mathrm{RZ}}
\nc{\qcqs}{\mathrm{qcqs}}
\nc{\aff}{\mathrm{aff}}
\nc{\cl}{\mathrm{cl}}
\nc{\Val}{\mathrm{Val}}
\nc{\GFin}{\mathrm{GFin}{}}
\nc{\Proj}{\mathrm{Proj}}
\nc{\inftyCat}{\term{$\infty$-category}}
\nc{\inftyCats}{\term{$\infty$-categories}}
\nc{\inftyOneCat}{\term{$(\infty,1)$-category}}
\nc{\inftyOneCats}{\term{$(\infty,1)$-categories}}
\nc{\inftyGrpd}{\term{$\infty$-groupoid}}
\nc{\inftyGrpds}{\term{$\infty$-groupoids}}
\nc{\inftyTop}{\term{$\infty$-topos}}
\nc{\inftyTops}{\term{$\infty$-toposes}}
\nc{\inftyTwoCat}{\term{$(\infty,2)$-category}}
\nc{\inftyTwoCats}{\term{$(\infty,2)$-categories}}
\DeclareMathOperator{\Ann}{Ann}
\DeclareMathOperator{\Fit}{Fit}
\DeclareMathOperator{\Ext}{Ext}
\title{An explicit self-duality}
\numberwithin{cor}{section}
\author[N. Kuhn]{Nikolas Kuhn}
\author[D. Mallory]{Devlin Mallory}
\author[V. Thatte]{Vaidehee Thatte}
\author[K. Wickelgren]{Kirsten Wickelgren}
\begin{document}

\def\comp{\wedge}

\begin{abstract}
We provide an exposition of the canonical self-duality associated to a presentation of a finite, flat, complete intersection over a Noetherian ring, following work of Scheja and Storch.
\end{abstract}

\maketitle
\section{Introduction}

Consider a finite ring map $A\to B$ and assume that $A$ is Noetherian. Coherent duality for proper morphisms provides a functor $f^!: D(\Spec A)\to D(\Spec B)$ on derived categories. The finiteness assumption on $f$ implies that $f^! A $ is isomorphic to the sheaf on $B$ associated to $\Hom_A(B,A)$. See for example \cite[Ideal Theorem and III \S 6]{MR0222093}. If we assume moreover that $f :\Spec B\to \Spec A$ is a local complete intersection morphism, then $f^! A$ is locally free \cite[0B6V]{stacks}. We thus obtain an isomorphism 
\begin{equation}\label{HomBA=Biso}
\Hom_A(B,A)\cong B
\end{equation} of $B$-modules under additional hypotheses, for example if we assume that $B$ local.

An explicit presentation of $B$ as \begin{equation}\label{BpresIntro}B=A[x_1,\dots,x_n]/(f_1,\dots,f_n)\end{equation} provides a {\em canonical} choice for the isomorphism~\eqref{HomBA=Biso}.
In this expository paper, we follow the approach of \cite{SchejaStorch} to construct this canonical isomorphism for $B$ a finite, flat $A$-algebra equipped with a presentation \eqref{BpresIntro}.

The approach is as follows:
Consider the ideals 
$$
(f_1\otimes 1-1\otimes f_1,\dots,f_n\otimes 1-1\otimes f_n)\subset
(x_1\otimes 1-1\otimes x_1,\dots,x_n\otimes 1-1\otimes x_n)
$$
of $A[x_1,\dots,x_n] \otimes A[x_1,\dots,x_n]$.
One writes
$$
f_j\otimes 1 -1\otimes f_j = \sum a_{ij} (x_1\otimes 1-1\otimes x_1,\dots,x_n\otimes 1-1\otimes x_n).
$$
and defines the element $\Delta \in B\otimes_A B$ as the image of $\det(a_{ij})$ under the morphism 
$A[x_1,\dots,x_n]\otimes A[x_1,\dots,x_n]\to B\otimes_A B$. This is shown to be independent of the choice of $a_{ij}$.
There is a canonical $A$-module morphism
$$
\chi: B\otimes _A B \to \Hom_A(\Hom_A(B,A),B).
$$
Let $I$ denote the kernel of multiplication $B \otimes_A B \to B$, or in other words the image of $(x_1\otimes 1-1\otimes x_1,\dots,x_n\otimes 1-1\otimes x_n)$. One checks that $\chi$ restricts to an isomorphism
$$
 \chi: \Ann_{B\otimes _A B} I
 \to \Hom_B(\Hom_A(B,A),B) 
$$
of $B$-modules and identifies the annihilator as
$ \Ann_{B\otimes _A B} I \cong \Delta$. Finally, one shows that
$$\chi(\Delta) =: \Theta \in \Hom_B(\Hom_A(B,A),B) $$
provides the desired isomorphism of $B$-modules $\Theta: \Hom_A(B,A)\to B$ guaranteed by the general theory of coherent duality.

Our arguments largely follow the outline of \cite{SchejaStorch}, although we make more use of Koszul homology in some proofs than the original did, and provide a self-contained proof of Lemma \ref{lem:1.4}; the goal in large part is to provide an English reference for this material. See also \cite[Appendices H and I]{MR2156630}.

\begin{rem}
One motivation for providing an explicit description of this isomorphism is to describe the resulting $A$-valued bilinear form on $B$. This form is defined via
$$
\langle b,c\rangle \mapsto \Theta^{-1}(b)(c) = \eta (bc) \in A,
$$
where $\eta = \Theta^{-1}(1)$. The form $\langle -, - \rangle$ has been used to give a notion of degree \cite{MR467800} \cite[some remaining questions (3)]{MR494226}.  For example, it computes the local $\mathbb{A}^1$-Brouwer degree of Morel \cite{MR3909901}.

\end{rem}

\subsection{Acknowledgements} Kirsten Wickelgren was partially supported by NSF CAREER DMS 2001890 and NSF DMS 2103838.

\section{Commutative Algebra Preliminaries}

\begin{lem}\label{lem:1.2} \cite[1.2]{SchejaStorch} Let $A$ be a noetherian ring and suppose that $f_1, \dots, f_n$ and  $g_1, \dots, g_n$ are sequences satisfying the following hypotheses:

\begin{enumerate}
\item[(i)] $\mathfrak{b} = (g_1, \dots, g_n) \subset \mathfrak{a} = (f_1, \dots f_n)$
\item[(ii)] If $\mathfrak{p}$ is a prime such that $\mathfrak{a} \subset \mathfrak{p}$, then 
the sequence $f_1, \dots, f_n$ is a regular sequence in $A_\p$, as is 
$g_1, \dots, g_n$.
\end{enumerate}
Write $ \displaystyle g_i = \sum^n_{i=1} a_{ij}f_j $, and let $(a_{ij})$ be the resulting matrix of coefficients.
\[
 \Delta:= \det\left(a_{ij}\right).
\]
Define $\overline{\Delta}$ to be the image of $\Delta$ under the map $A \rightarrow A/\mathfrak{b}$. Then:

\begin{enumerate}
\item[(a)] The element $\overline{\Delta}$ is independent of the choices of $a_{ij}$.
\item[(b)] We have an equality (of $A/\mathfrak{b}$-ideals):
\[
( \overline{\Delta}) = \Fit_{A/\mathfrak{b}}(\mathfrak{a}/\mathfrak{b}),
\]
where $\Fit $ denotes the 0-th Fitting ideal.
\item[(c)] We have an equality of ideals:
\[
( \overline{\Delta}) = \Ann_{A/\mathfrak{b}}(\mathfrak{a}/\mathfrak{b}),
\]
and
\[
\mathfrak{a}/\mathfrak{b} =  \Ann_{A/\mathfrak{b}}(\overline{\Delta}).
\]
\end{enumerate}
\end{lem}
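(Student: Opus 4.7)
Plan: All three parts follow from the chain of ideals in $A/\mathfrak{b}$,
$$
(\bar\Delta)\;\subseteq\;\Fit_{A/\mathfrak{b}}(\mathfrak{a}/\mathfrak{b})\;\subseteq\;\Ann_{A/\mathfrak{b}}(\mathfrak{a}/\mathfrak{b})\;\subseteq\;(\bar\Delta),
$$
since the Fitting ideal is intrinsic to the module and hence (a) is an automatic consequence of (b). The first two inclusions are formal. Cramer's rule applied to $a\vec f=\vec g$ gives $\Delta\vec f=\operatorname{adj}(a)\vec g\in\mathfrak{b}^n$, so $\bar\Delta\in\Ann_{A/\mathfrak{b}}(\mathfrak{a}/\mathfrak{b})$. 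The rows of $\bar a$ furnish $n$ relations on the generators $\bar f_1,\dots,\bar f_n$ of $\mathfrak{a}/\mathfrak{b}$, so $\bar\Delta=\det\bar a$ is a top-dimensional Fitting minor; and $\Fit_0(M)\subseteq\Ann(M)$ is standard for finitely generated $M$.

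The crux is therefore the rightmost inclusion $\Ann\subseteq(\bar\Delta)$, i.e., the classical linkage identity $\mathfrak{b}:\mathfrak{a}=\mathfrak{b}+(\Delta)$. I would first localize: all ideals in question commute with localization, and at primes $\mathfrak{p}\not\supset\mathfrak{a}$ some $f_j$ is a unit in $A_\mathfrak{p}$, which together with $\Delta\vec f\in\mathfrak{b}^n$ forces $\bar\Delta=0$ and all three ideals to collapse. So we may assume both $(f_j)$ and $(g_i)$ are globally regular sequences. Given $h\in A$ with $h\mathfrak{a}\subseteq\mathfrak{b}$, write $h\vec f=H\vec g=Ha\,\vec f$ for some matrix $H$; the matrix $M:=hI-Ha$ has each row a Koszul syzygy of $(f_j)$ (so with entries in $\mathfrak{a}$) and satisfies $M\vec f=0$. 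Applying the adjugate to $M\vec f=0$ gives $\det M\cdot\vec f=0$, and regularity of $(f_j)$ forces $\det M=0$ in $A$. Expanding the characteristic polynomial of $Ha$ at $\lambda=h$ and tracking how entries of $M$ in $\mathfrak{a}$ propagate produces an identity of the form
$$
h^n \;=\; (-1)^{n+1}\det(H)\,\Delta \;+\; \sum_{k=1}^{n-1} h^{n-k} S_k, \qquad S_k\in\mathfrak{a}^k,
$$
from which a filtration/induction argument modulo $\mathfrak{b}$, iterating on the powers of $\mathfrak{a}$ appearing in the $S_k$, extracts $h\in(\bar\Delta)+\mathfrak{b}$.

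The second equality in (c), $\mathfrak{a}/\mathfrak{b}=\Ann_{A/\mathfrak{b}}(\bar\Delta)$, says $\mathfrak{b}:(\Delta)=\mathfrak{a}$; the $\supseteq$ direction is immediate from $\bar\Delta\in\Ann(\mathfrak{a}/\mathfrak{b})$, and the reverse is a ``double linkage'' statement obtained by running the extraction argument with $\mathfrak{a}$ and $\mathfrak{b}+(\Delta)$ interchanged, using that these ideals are linked through $\mathfrak{b}$. The principal obstacle throughout is this extraction step: conceptually it is the linkage formula for complete intersections, which drops out cleanly from Gorenstein duality for the local complete intersection $A/\mathfrak{b}$, but a self-contained derivation from Koszul acyclicity and the adjugate identity requires careful bookkeeping with the powers of $\mathfrak{a}$ arising in the characteristic polynomial expansion. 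I would expect the authors to isolate this bookkeeping in a separate Koszul-homology lemma before proving the present statement.
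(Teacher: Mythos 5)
Your overall plan---establishing the chain $(\bar\Delta)\subseteq\Fit_{A/\mathfrak{b}}(\mathfrak{a}/\mathfrak{b})\subseteq\Ann_{A/\mathfrak{b}}(\mathfrak{a}/\mathfrak{b})\subseteq(\bar\Delta)$ after localizing, and deducing (a) as a byproduct of (b)---is sound in outline and is a genuinely different (and arguably more economical) organization than the paper's, which proves (a) and (b) directly via Cramer's-rule manipulations with the relation matrix. The first two inclusions are indeed formal, and your localization reduction is correct.

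The gap is the rightmost inclusion $\Ann_{A/\mathfrak{b}}(\mathfrak{a}/\mathfrak{b})\subseteq(\bar\Delta)$, which you correctly identify as the crux but do not prove. Your determinant manipulation, done carefully, reads: with $M:=hI-Ha$ having rows in the Koszul syzygy module (hence entries in $\mathfrak{a}$) and $\det M=0$, expanding $\det(Ha)=\det(hI-M)$ gives $\det(H)\Delta=h^n+\sum_{k=1}^{n-1}(-1)^k e_k(M)h^{n-k}$ with $e_k(M)\in\mathfrak{a}^k$. Since $h\mathfrak{a}\subseteq\mathfrak{b}$, each summand $h^{n-k}e_k(M)$ lies in $\mathfrak{b}$ for $1\le k\le n-1$, so this delivers $h^n\equiv\det(H)\bar\Delta\pmod{\mathfrak{b}}$---that is, $h^n\in(\Delta)+\mathfrak{b}$, not $h\in(\Delta)+\mathfrak{b}$. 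Already for $n=2$ one gets $h^2\equiv\det(H)\Delta\pmod{\mathfrak{b}}$ and there is no evident way to extract $h$ itself. The promised ``filtration/induction argument modulo $\mathfrak{b}$'' that would bridge this is exactly the missing content, and it is not a routine bookkeeping step; this is the substance of Wiebe's linkage lemma. Similarly, the second equality $\Ann_{A/\mathfrak{b}}(\bar\Delta)=\mathfrak{a}/\mathfrak{b}$ is asserted via ``double linkage'' with $\mathfrak{a}$ and $\mathfrak{b}+(\Delta)$ interchanged, but $\mathfrak{b}+(\Delta)$ is not presented as a complete intersection of length $n$, so the same lemma does not directly apply.

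For contrast, the paper's proof of (c) avoids this bookkeeping entirely by passing through Koszul homology: it identifies $\Ann_{A/\mathfrak{b}}(\mathfrak{a}/\mathfrak{b})$ with $\mathrm{Tor}_n^A(A/\mathfrak{b},A/\mathfrak{a})$ via the Koszul resolution of $A/\mathfrak{a}$, identifies the comparison map from $\mathrm{Tor}_n^A(A/\mathfrak{b},A/\mathfrak{b})\cong A/\mathfrak{b}$ as multiplication by $\bar\Delta$ using functoriality of the Koszul complex, and proves surjectivity of that map from the long exact Tor sequence associated to $0\to\mathfrak{a}/\mathfrak{b}\to A/\mathfrak{b}\to A/\mathfrak{a}\to0$. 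The kernel of the same map computes $\Ann_{A/\mathfrak{b}}(\bar\Delta)$ and is $\mathrm{Tor}_n^A(A/\mathfrak{b},\mathfrak{a}/\mathfrak{b})\cong\mathfrak{a}/\mathfrak{b}$, giving the second equality of (c) in one stroke. If you want to pursue the elementary route, you need to supply the Wiebe-type argument; as written, the proposal identifies where the theorem lives but does not cross the gap.
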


\begin{rem}
We comment on condition (ii). If $(A,\mathfrak p)$ is a local ring and $\mathfrak a \subset \mathfrak p$, then condition (ii) is equivalent to asking that $f_1, \dots, f_n$ and  $g_1, \dots, g_n$ are regular sequences.  In general, condition (ii) asks only that they are regular sequences after localizing at primes containing $\mathfrak a$ (e.g., they may not be regular sequences on $A$).
\end{rem}

\begin{proof} First, we may assume that $A$ is a local ring and each of the $f_i$'s and $g_i$'s are in the maximal ideal $\mathfrak{m}$. 

(a): Write $g_i = \sum^{n}_{i=1} b_{ij}f_j$. We want to show that $\det(a_{ij}) - \det(b_{ij})$ is in $\mathfrak{b}$. It suffices to consider the case where $a_{ij} = b_{ij}$ for all $j$ and for $i=1, \dots, n-1$, as this allows us to change the presentation of one $g_i$ at a time, and thus all of them.
 Define
\[
c_{ij}=\begin{cases}
a_{ij} = b_{ij} & i = 1, \dots, n-1\\
a_{ij} - b_{ij} & i = n,
\end{cases}
\]
By cofactor expansion along the $j$-th row, we have that 
\[
\det(a_{ij}) - \det(b_{ij}) = \det(c_{ij}). 
\]
But now 
\[
\bigl(c_{ij}\bigr) \cdot \left(\begin{array} {c} f_1 \\ f_2 \\ \dots \\ f_{n-1} \\ f_n \end{array} \right) =  \left(\begin{array} {c} g_1 \\ g_2 \\ \dots \\ g_{n-1} \\0 \end{array} \right)
\]
By Cramer's rule, for all $k = 1, \dots, n$ we have that
\[
\det(c_{ij}) \cdot f_k \in (g_1, \dots, g_{n-1}),
\]
which means
\[
\det(c_{ij}) \cdot \mathfrak{a} \in (g_1, \dots, g_{n-1}).
\]
But $g_n \in \mathfrak{a}$ and hence 
\[
\det(c_{ij}) \cdot g_n \in (g_1, \dots, g_{n-1}),
\]
which means that $\det(c_{ij}) \in (g_1, \dots, g_n) = \mathfrak{b}$ since $g_1, \dots, g_n$ is a regular sequence. 

(b): First observe that 
\[
\Fit_A(\mathfrak{a}/\mathfrak{b}) \mod \mathfrak{b} = \Fit_{A/\mathfrak{b}}(\mathfrak{a}/\mathfrak{b}).
\]
Therefore, to prove the claim, it suffices to prove that
\[
\Fit_A(\mathfrak{a}/ \mathfrak b) = \Delta + I, 
\]
where $I \subset \mathfrak{b}$. 

To prove this claim, note that the Fitting ideal of the $A$-module $\mathfrak a / \mathfrak b$ is computed by a presentation:
\[
A^{\oplus n} \oplus A^{\oplus  {n \choose 2}} \xrightarrow{T} A^{\oplus n} \rightarrow \mathfrak{a}/\mathfrak{b} \rightarrow 0,
\]
where $T$ is given by:
\[
(a_{ij}) \times d_2^{\mrm{Kosz}}.
\]
In other words, the matrix of $T$ has the first $n$-columns are just given by $a_{ij}$ and, the last ${n \choose 2}$ columns are composed of the usual Koszul relations among the $f_i$. (Note that the sequence $f_1,\ldots,f_n$ is regular in our local ring, so the corresponding Koszul complex produces a resolution of $\mathfrak{a}$ \cite[062F]{stacks}.) 

Now, the Fitting ideal is given by the $n \times n$-minors of the matrix of $T$. The first minor is $\Delta$. If $\Delta'$ is another $n\times n$ minor, then it is the determinant of a matrix $T'$, which is composed of some $r$ columns of $(a_{ij})$ and $n-r$ columns of $d_1^{\mrm{Kosz}}$; without loss of generality we may assume $T'$ contains the first $r$ columns of $(a_{ij})$ (if not, simply reorder the $g_i$, using that the ring $A$ is local and thus regularity of the sequence of $g_i$ preserved). Applying $T'$ to $(f_k)$ we get
$$
\bigl(T' \bigr)
 \left(\begin{array} {c} f_1 \\ f_2 \\ \vdots \\ f_{n}  \end{array} \right)
=
 \left(\begin{array} {c} g_1 \\ \vdots\\g_r \\0\\ \vdots \\ 0  \end{array} \right)
$$
 
We again conclude that $\Delta' f_i =\det(T')f_i \in \mathfrak b$ for each $i=1,\dots,n$. Thus,
\[
\Delta' \cdot \mathfrak{a} \in (g_1, \dots, g_{{n-1}}), 
\]
and in particular
\[
\Delta' \cdot g_n \in (g_1, \dots, g_{{n-1}}),
\]
which by regularity of the $g_i$ means that $\Delta' \in \mathfrak{b}$ and thus $\Fit_A(\mathfrak a/ \mathfrak b)=\Delta +I$ with $I\subset \mathfrak b$.

(c): First, we claim that we have an isomorphism:
\[
\Ann_{A/\mathfrak{b}}(\mathfrak{a}/\mathfrak{b}) \cong \mrm{Tor}^A_n(A/\mathfrak{b}, A/\mathfrak{a}). 
\] We will abbreviate $\mrm{Tor}^A_j$ by $\mrm{Tor}_j$ and $\otimes_A$ by $\otimes$ in what follows.
To prove this, we deploy the Koszul complex. (As noted above, a regular sequence is Koszul-regular by \cite[062F]{stacks}.) We thus have a quasi-isomorphism.
\[
K_{\bullet}(f_1, \dots, f_n) \simeq A/\mathfrak{a} 
\]
Therefore the Tor group above is computed as the 
kernel of $1 \otimes d_n^{\mrm{Kosz}}$ in the
complex $A/\mathfrak{b} \otimes K_{\bullet}(f_1, \dots, f_n)$:
\[
0 \rightarrow A/\mathfrak{b} \xrightarrow{(f_1, \dots, f_n)} (A/\mathfrak{b})^{\oplus n}.
\]
Indeed, the cohomology of this small complex is the desired annihilator and thus we obtain the desired isomorphism. 

On the other hand, we claim that $\mrm{Tor}_n(A/\mathfrak{a}, A/\mathfrak{b}) \cong \Delta \cdot A/\mathfrak{b}$. To see this note that we have a short exact sequence of $A$-modules:
\[
0 \rightarrow \mathfrak{a}/\mathfrak{b} \rightarrow A/\mathfrak{b} \rightarrow A/\mathfrak{a} \rightarrow 0.
\]
We claim that the induced long exact sequence splits into short exact sequences for $j \geq 1$
\[
0 \rightarrow \mrm{Tor}_j(A/\mathfrak{b}, \mathfrak{a}/\mathfrak{b}) \rightarrow \mrm{Tor}_j(A/\mathfrak{b}, A/\mathfrak{b}) \rightarrow \mrm{Tor}_j(A/\mathfrak{b}, A/\mathfrak{a}) \rightarrow 0
\]
Indeed, via the Koszul complex for $A/\mathfrak{b}$, we see that for $j \geq 1$:
\begin{equation} \label{eq:tor-ab}
 \mrm{Tor}_j(A/\mathfrak{b}, \mathfrak{a}/\mathfrak{b}) \cong (\mathfrak{a}/\mathfrak{b})^{n \choose j} \qquad  \mrm{Tor}_j(A/\mathfrak{b}, A/\mathfrak{b}) \cong( A/\mathfrak{b})^{n \choose j}, 
\end{equation}
and the map $ \mrm{Tor}_j(A/\mathfrak{b}, \mathfrak{a}/\mathfrak{b}) \rightarrow  \mrm{Tor}_j(A/\mathfrak{b}, A/\mathfrak{b})$ is identified with the direct sum of copies of the injection $\mathfrak{a}/\mathfrak{b} \hookrightarrow A/\mathfrak{b}$. To conclude, the functoriality of the Koszul complex \cite[0624]{stacks} yields a morphism of complexes
\[
A/\mathfrak{b} \otimes K_{\bullet}(g_1, \dots, g_n) \rightarrow A/\mathfrak{b} \otimes K_{\bullet}(f_1, \dots, f_n);
\]
where the left end is as follows:
\begin{equation} \label{eq:kosz}
\begin{tikzcd}
A/\mathfrak{b} \ar{r}{0} \ar[swap]{d}{\overline{\Delta}} & (A/\mathfrak{b})^{\oplus n} \ar{d}\\
A/\mathfrak{b} \ar{r}{(f_1, \dots, f_n)} & (A/\mathfrak{b})^{\oplus n}.
\end{tikzcd}
\end{equation}
Since the map $\mrm{Tor}_j(A/\mathfrak{b}, A/\mathfrak{b}) \rightarrow \mrm{Tor}_j(A/\mathfrak{b}, A/\mathfrak{a})$ is a surjection, we conclude that
\[
 \mrm{Tor}_n(A/\mathfrak{b}, A/\mathfrak{a}) \cong \Im(\overline{\Delta}) \cong  \Delta \cdot A/\mathfrak{b}
\] 
as desired. 

For the second claim, note that the ideal $\Ann_{A/\mathfrak{b}}(\overline{\Delta})$ is obtained as the kernel of the left vertical map in~\eqref{eq:kosz}, and is thus isomorphic to
$\mrm{Tor}_n(A/\mathfrak{b},\mathfrak{a}/\mathfrak{b})$, which we already know is isomorphic to $\mathfrak{a}/\mathfrak{b}$ by~\eqref{eq:tor-ab}.

\end{proof}

A module $M$ over a ring $R$ is said to be reflexive if the natural map $R \to \Hom_R(\Hom_R(M,R),R)$ is an isomorphism \cite[0AUY]{stacks}. A form of the following lemma is in the stacks project (\cite[0AVA]{stacks}), but assumes that $A$ is integral and that $A=B$.  The following is \cite[1.3]{SchejaStorch}.
\begin{lem}\label{lemref}
Let $A$ be a Noetherian ring and $B$ a finite flat $A$-algebra. A finite $B$-module $M$ is reflexive if and only if the following conditions hold:
\begin{enumerate}[label=(\roman*)]
	\item If $\mathfrak{p}\subset A$ is a prime ideal with $\operatorname{depth} A_{\mathfrak{p}}\leq 1$, then $M_{\mathfrak{p}}$ is a reflexive $B_{\mathfrak{p}}$-module. \label{lemref1}
	\item If $\mathfrak{p}\subset A$ is a prime ideal with $\operatorname{depth} A_{\mathfrak{p}}\geq 2$, then $\operatorname{depth}_{A_{\mathfrak{p}}}(M_{\mathfrak{p}})\geq 2$. \label{lemref2}
\end{enumerate}
\end{lem}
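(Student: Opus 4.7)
The plan is to analyze the natural evaluation map $\phi_M \colon M \to M^{**}$ with $(-)^{*} := \Hom_B(-,B)$ and show it is an isomorphism exactly when (i) and (ii) hold. Since $B$ is finite over the Noetherian $A$, the ring $B$ is Noetherian and $M$ is finitely presented over $B$, so formation of $(-)^{**}$ commutes with localization at primes $\mathfrak{p} \subset A$, and each condition in the statement is local over $A$; I will freely reduce to the case that $A$ is local with closed point $\mathfrak{p}$.

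The engine for both directions is the following depth estimate, which I would prove first: \emph{for any finite $B$-module $N$, one has $\operatorname{depth}_A N^{*} \geq \min(2, \operatorname{depth} A)$.} Given a presentation $B^{\oplus m} \to B^{\oplus n} \to N \to 0$, dualizing yields $0 \to N^{*} \to B^{\oplus n} \to Q \to 0$ with $Q \hookrightarrow B^{\oplus m}$. Because $B$ is finite and flat over the Noetherian $A$ it is $A$-projective, and $A$-free in the local setting, so $\operatorname{depth}_A B^{\oplus k} = \operatorname{depth} A$; applying the standard short-exact-sequence depth inequality first to $0 \to Q \to B^{\oplus m} \to B^{\oplus m}/Q \to 0$ and then to $0 \to N^{*} \to B^{\oplus n} \to Q \to 0$ delivers the claim. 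Condition (i) of the forward implication is then immediate from the localization of $(-)^{**}$, and condition (ii) follows by applying the estimate with $N = M^{*}$ and using $M \cong M^{**}$.

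For the reverse direction, assume (i) and (ii) and set $K := \ker \phi_M$, $C := \operatorname{coker} \phi_M$. Condition (i) forces $K$ and $C$ to be supported over $A$ only at primes with $\operatorname{depth} A_\mathfrak{p} \geq 2$. To conclude $K = 0$, note that every $\mathfrak{p} \in \operatorname{Ass}_A M$ satisfies $\operatorname{depth}_{A_\mathfrak{p}} M_\mathfrak{p} = 0$ and hence, by (ii), must have $\operatorname{depth} A_\mathfrak{p} \leq 1$; combining $\operatorname{Ass}_A K \subseteq \operatorname{Ass}_A M$ with the support constraint on $K$ gives $\operatorname{Ass}_A K = \emptyset$. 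To conclude $C = 0$, apply the depth inequality to $0 \to M \to M^{**} \to C \to 0$ to obtain $\operatorname{depth}_{A_\mathfrak{p}} C_\mathfrak{p} \geq \min(\operatorname{depth}_{A_\mathfrak{p}} M^{**}_\mathfrak{p}, \operatorname{depth}_{A_\mathfrak{p}} M_\mathfrak{p} - 1)$; at primes $\mathfrak{p}$ with $\operatorname{depth} A_\mathfrak{p} \geq 2$, condition (ii) and the key estimate force both arguments of the $\min$ to be $\geq 1$, and at the remaining primes $C_\mathfrak{p} = 0$ by (i), so $\operatorname{Ass}_A C = \emptyset$ and $C = 0$. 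The main delicacy is tracking depths and associated primes consistently over $A$ rather than $B$; the hypothesis that $B$ is flat (not merely finite) over $A$ enters essentially only in the equality $\operatorname{depth}_A B^{\oplus k} = \operatorname{depth} A$ used in the key estimate.
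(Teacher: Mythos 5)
Your forward implication and the ``key estimate'' $\operatorname{depth}_A N^{*}\geq\min(2,\operatorname{depth} A)$ for a finite $B$-module $N$ match the paper's argument in substance: the paper proves the same dual-depth bound (phrased only for $\operatorname{depth} A\geq 2$), using that if $a_1,a_2$ is a regular sequence in $A_{\mathfrak p}$ then $a_1$ is a nonzerodivisor on $\Hom_{B_{\mathfrak p}}(N,B_{\mathfrak p})$ and the cokernel of $a_1$ sits inside $\Hom_{B_{\mathfrak p}}(N,B_{\mathfrak p}/a_1 B_{\mathfrak p})$, whereas you extract the same inequality from a presentation $B^{\oplus m}\to B^{\oplus n}\to N\to 0$ and two applications of the depth lemma. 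Where you genuinely diverge is the converse. The paper argues by contradiction: it chooses a prime $\mathfrak p$ minimal among those where $M_{\mathfrak p}$ fails to be reflexive, localizes there, uses that minimality to conclude $\Ker\varphi$ and $\Coker\varphi$ have finite length, kills $\Ker\varphi$ with a nonzerodivisor coming from $\operatorname{depth}_A M\geq 1$, and then kills $\Coker\varphi$ by a depth chase on $0\to M\to M^{**}\to\Coker\varphi\to 0$. You instead run a direct associated-primes argument over $A$ with no reduction to a single local ring: (i) confines $\operatorname{Supp}_A K$ and $\operatorname{Supp}_A C$ to primes of $A$-depth $\geq 2$; $\operatorname{Ass}_A K\subseteq\operatorname{Ass}_A M$ consists of primes of $A$-depth $\leq 1$ by the contrapositive of (ii), forcing $\operatorname{Ass}_A K=\varnothing$ and $K=0$; and the depth inequality on $0\to M\to M^{**}\to C\to 0$, together with (ii) for $M$ and the key estimate applied to $M^{**}=(M^{*})^{*}$, gives $\operatorname{depth}_{A_{\mathfrak p}}C_{\mathfrak p}\geq 1$ at every prime where $C_{\mathfrak p}$ could be nonzero, so $\operatorname{Ass}_A C=\varnothing$. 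Both routes are correct. Yours is somewhat more uniform: it avoids the minimal-bad-prime step and the finite-length reduction, and it also bypasses the paper's appeal to the reflexivity of $M^{**}$ (the paper feeds $M^{**}$ back through the forward implication to bound its depth, a detour your key estimate renders unnecessary since it bounds the depth of any dual directly). The paper's version trades that uniformity for the concreteness of a single local ring and a finite-length obstruction module.
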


\begin{proof}
The property of being reflexive is preserved under any localization of $B$ \cite[0EB9]{stacks}, and can be checked locally on $B$ \cite[0AV1]{stacks}. Therefore reflexivity of $M$ implies \ref{lemref1}. Reflexivity implies \ref{lemref2}:  Any regular sequence in $A_{\mathfrak{p}}$ is a regular sequence on $B_{\mathfrak{p}}$ by flatness. Let $a_1,a_2$ be a length $2$ regular sequence on $A_{\mathfrak{p}}$. Let $N$ be any $B_{\mathfrak{p}}$-module. Then $a_1$ is a nonzerodivisor on $\Hom_{B_\mathfrak{p}}(N,B_{\mathfrak{p}})$. The cokernel of multiplication by $a_1$ is a submodule of $\Hom_{B_{\mathfrak{p}}}(N, B_{\mathfrak{p}}/a_1B_{\mathfrak{p}})$, on which $a_2$ is a nonzerodivisor. This shows the claim. 
 (Note: This is almost \cite[0AV5]{stacks}, except that we take $\Hom_B$ but want the $A$-depth.) 

Conversely, suppose $M$ is not reflexive. We assume for the sake of contradiction that properties \ref{lemref1} and \ref{lemref2} hold.  Since reflexivity can be checked locally, there is some minimal $\mathfrak{p}\subset A$ among all prime ideals of $A$ for which $M_{\mathfrak{p}}$ is a not a reflexive $B_{\mathfrak{p}}$-module. Without loss of generality, we may assume that $A$ is local with maximal ideal $\mathfrak{p}$. Since $M_{\mathfrak{p}}$ is not reflexive, we must have that $\operatorname{depth} A_{\mathfrak{p}}\geq 2$ and therefore $\operatorname{depth}_{A_{\mathfrak{p}}}(M_{\mathfrak{p}})\geq 2$. We consider the exact sequence 
\[0\to \Ker\varphi \to M\to \Hom_B(\Hom_B(M,B),B)\to \operatorname{Coker}\varphi\to 0,\]
where $\varphi$ is the canonical map to the double-dual. By assumption, $\varphi$ becomes an isomorphism after localizing at any prime of $A$ different from $\mathfrak{p}$. It follows that $\Ker\varphi$ and $\operatorname{Coker}\varphi$ have finite length. Since $\operatorname{depth}_A M\geq 1$, there exists some $x\in A$ which is a nonzerodivisor on $M$. But then $x$ is a nonzerodivisor on the finite-length module $\Ker\varphi$, which therefore must vanish. Since $\Hom_B(\Hom_B(M,B),B)$ is reflexive (as a $B$-module), it has $A$-depth $\geq 2$ by the forward implication of the lemma. The exact sequence 
\[0\to M\to \Hom_B(\Hom_B(M,B),B) \to \Coker \varphi \to 0,\] 
then shows that $\operatorname{depth}_{A_\mathfrak{p}}\Coker \varphi\geq 1$ by the standard behavior of depth in short exact sequences \cite[00LX]{stacks}. Therefore the cokernel must vanish, which shows that $M$ is reflexive.  
\end{proof}

\begin{lem}
\label{lem:1.4}
\cite[1.4]{SchejaStorch} Let $A$ be a Noetherian ring and let $B$ be a finite flat $A$-algebra. Let $M$ be a finite $B$-module, which is projective as an $A$-module. If $\Hom_B(M,B)$ is projective as a $B$-module, then $M$ is projective as a $B$-module. In particular, if $\Hom_B(M,B)$ is free, then $M$ is free. 
\end{lem}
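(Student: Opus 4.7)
The strategy is to show $M$ is reflexive as a $B$-module; this will suffice, because $\Hom_B(M,B)$ being finite projective over $B$ implies its $B$-dual $\Hom_B(\Hom_B(M,B),B)$ is also finite projective (dualize a splitting $\Hom_B(M,B) \oplus Q \cong B^r$ to obtain $\Hom_B(\Hom_B(M,B),B) \oplus \Hom_B(Q,B) \cong B^r$), and reflexivity will then identify $M$ with a finite projective $B$-module.

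To establish reflexivity, I apply \lemref{lemref}. Condition (ii) there is immediate from the hypothesis that $M$ is $A$-projective: for any prime $\mathfrak{p}\subset A$, the localization $M_{\mathfrak{p}}$ is $A_{\mathfrak{p}}$-free, so $\operatorname{depth}_{A_{\mathfrak{p}}} M_{\mathfrak{p}} = \operatorname{depth} A_{\mathfrak{p}} \geq 2$ whenever the hypothesis requires it. For condition (i)---reflexivity of $M_{\mathfrak{p}}$ over $B_{\mathfrak{p}}$ at primes $\mathfrak{p}$ of $A$ with $\operatorname{depth} A_{\mathfrak{p}} \leq 1$---I proceed by Noetherian induction: by localizing at $\mathfrak{p}$ I reduce to the case $A$ local with maximal ideal $\mathfrak{p}$, and I may assume the conclusion of the present lemma holds after localizing at any strictly smaller prime $\mathfrak{p}' \subsetneq \mathfrak{p}$. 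The inductive hypothesis then provides $B_{\mathfrak{p}'}$-projectivity, hence reflexivity, of $M_{\mathfrak{p}'}$ for such $\mathfrak{p}'$. (Note that the hypotheses of the lemma are preserved under localization, since $\Hom_B(M,B)_{\mathfrak{p}'} \cong \Hom_{B_{\mathfrak{p}'}}(M_{\mathfrak{p}'},B_{\mathfrak{p}'})$ by finite presentation.)

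The main obstacle will be the base case of this induction: $A$ local with $\operatorname{depth} A \leq 1$, in which one must verify reflexivity of $M$ over $B$ directly. Both $M$ and $M^{\vee\vee}:=\Hom_B(\Hom_B(M,B),B)$ are $A$-free of finite rank---the source by hypothesis, and the target because $\Hom_B(M,B)$ is $B$-projective (hence $A$-projective and therefore $A$-free), and its $B$-dual likewise. One then analyzes the canonical $B$-linear map $\varphi\colon M \to M^{\vee\vee}$: the inductive hypothesis gives that $\varphi$ is already an isomorphism after localizing at any prime strictly contained in $\mathfrak{p}$, so the kernel and cokernel of $\varphi$ are supported at $\mathfrak{p}$ alone, hence have finite length. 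The $A$-freeness of $M$ forces $\Ker\varphi = 0$, and a depth comparison---using that $M^{\vee\vee}$ is $A$-free of the same rank as $M$ (verified by computing at a minimal prime of $A$) and the standard behavior of depth in short exact sequences, as invoked in the proof of \lemref{lemref}---forces $\operatorname{Coker}\varphi = 0$ as well.
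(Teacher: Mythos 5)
The reduction steps in your proposal match the paper's: show $M$ is reflexive via \lemref{lemref}, observe that condition (ii) is free from $A$-projectivity, and localize to reduce condition (i) to the case where $A$ is local with $\operatorname{depth} A\leq 1$, where kernel and cokernel of $\varphi\colon M\to M^{\vee\vee}$ have finite length. Up to that point you are on the right track, and the observation that reflexivity closes the argument (since $M^{\vee\vee}$ is finite projective over $B$) is correct.

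The gap is in the base case, where your depth arguments do not close. You propose to kill $\Ker\varphi$ and $\Coker\varphi$ by transplanting the finite-length-plus-depth argument from the converse direction of \lemref{lemref}, but that argument was run under the assumption $\operatorname{depth}A_{\mathfrak{p}}\geq 2$ --- here you have $\operatorname{depth}A\leq 1$, and the inequalities degrade. For $\Ker\varphi$: when $\operatorname{depth}A=0$ there is no nonzerodivisor on the $A$-free module $M$, so a finite-length submodule need not vanish. For $\Coker\varphi$: even when $\operatorname{depth}A=1$, a short exact sequence $0\to M\to M^{\vee\vee}\to C\to 0$ with $M,M^{\vee\vee}$ free of the same rank and $C$ of finite length is perfectly consistent (take $A=k[[t]]$, both modules $A$, the map multiplication by $t$, $C=k$); the depth inequality only gives $\operatorname{depth}C\geq\min(\operatorname{depth}M-1,\operatorname{depth}M^{\vee\vee})=0$, which is vacuous. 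Nothing in your sketch rules this out, because you never actually exploit the hypothesis that $\Hom_B(M,B)$ is $B$-projective --- you only use $A$-freeness of $M$ and of $M^{\vee\vee}$.

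The paper's argument at this point is genuinely different and is where the real work happens. It takes a free presentation $0\to U\to F\to M\to 0$, dualizes to get $0\to M^*\to F^*\to U^*$, sets $Q:=\operatorname{Im}(F^*\to U^*)$, and identifies $\Coker\varphi\cong\Ext^1_B(Q,B)$. Since $M^*$ is projective, $Q$ has projective dimension at most $1$ over $B$; the Auslander--Buchsbaum formula (applied at each maximal ideal of $B$, using $\operatorname{depth}B_{\mathfrak m}=\operatorname{depth}A\leq 1$, and in the depth-$1$ case a small depth estimate on $Q$) then forces $Q$ to be projective, hence $\Coker\varphi=0$. Surjectivity of $\varphi$ onto the projective module $M^{**}$ gives a splitting $M\cong M^{**}\oplus N$, and the rank comparison at a minimal prime is used to kill $N$ --- not to verify injectivity of $\varphi$ directly. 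You would need to import this Auslander--Buchsbaum step (or an equivalent use of the projectivity of $\Hom_B(M,B)$) to make your base case go through.
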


\begin{proof}
It is enough to show that $M$ is reflexive. We are therefore reduced to checking the conditions \ref{lemref1} and \ref{lemref2} of Lemma \ref{lemref}. Clearly, \ref{lemref2} holds, since $M$ is projective over $A$. It remains to check \ref{lemref1}. We may therefore assume that $A$ is a Noetherian local ring with $\operatorname{depth} A\leq 1$, and we want to show that $M$ is projective as a $B$-module. Since $B$ is finite flat over $A$, we have depth $B_{\frak{m}}=\operatorname{depth} A$ for every maximal ideal $\frak{m}$ of $B$ \cite[0337]{stacks}.

Throughout, we will write $N^*:=\Hom_B(N,B)$ for a $B$-module $N$. Consider the map
\[\varphi:M\to M^{**}.\]
Let $C:=\Coker \varphi$. Taking a presentation
of $M$, we obtain an exact sequence
\[0\to U\to F\to M\to 0\]
with $F$ free. Consider the dual sequence
\[0\to M^*\to F^* \to U^*,\]
and let $Q:=\operatorname{Im}(F^*\to U^*)$. Since $M^*$ is projective by assumption, $Q$ has projective dimension $0$ or $1$ as a $B$-module. 

We have the commutative diagram
\begin{equation*}
\begin{tikzcd}
F\ar[r]\ar[d,"\sim"]& M \ar[d]&&\,\\
F^{**}\ar[r]& M^{**}\ar[r] &\Ext^1_B(Q,B)\ar[r] & 0
\end{tikzcd}
\end{equation*} 
with exact lower row. Since $F\to M$ is a surjection, we see that $C=\Ext^1_B(Q,B)$. 
Suppose $\operatorname{depth} A=0$. Apply the  Auslander--Buchsbaum formula \cite[090V]{stacks} to the $B_{\frak{m}}$-module $Q_{\frak{m}}$ for each maximal ideal $\frak{m}$. We find that $Q_{\frak{m}}$ has projective dimension zero, i.e., is projective. Therefore $C_{\frak{m}}=0$ and $C=0$. 

Now suppose that $\operatorname{depth} A =1$. Then $\operatorname{depth}_{B_{\frak{m}}} U_{\frak{m}}^*\geq 1$ by \cite[0AV5]{stacks}, whence $$\operatorname{depth}_{B_{\frak{m}}} Q_{\frak{m}}\geq 1$$ by  \cite[00LX]{stacks}. Again by Auslander--Buchsbaum, we find that $Q_{\frak{m}}$ is projective, and that $C=0$. 

We have shown that in any case $M\to M^{**}$ is surjective. Since $M^{**}$ is projective, this implies $M\simeq M^{**}\oplus N$ for some $B$-module $N$. It follows that $N^*=0$ and that $N$ is again free as an $A$-module.

By assumption both $M$ and $M^{**}$ are free over the local ring $A$. A surjection of finite free $A$-modules is an isomorphism if they have the same rank. To show two finite free modules have the same rank, we may localize at a minimal prime ideal $\mathfrak{q}$ of $A$, so that also $B_{\mathfrak{q}}$ is a zero-dimensional ring. 
Over the Artinian ring $B_{\mathfrak q}$, $\Hom_{B_{\mathfrak{q}}}(N_{\mathfrak{q}}, B_{\mathfrak{q}})=0$ implies $N_{\mathfrak{q}}=0$. (To see this, note that we may assume that $B$ is local, with maximal ideal $\mathfrak{m}$. Then $N_{\mathfrak{q}} \to \mathfrak{m}N_{\mathfrak{q}}$ is nonzero by Nakayama's lemma. Since $B_{\mathfrak{q}}$ has finite length, there is a nonzero element annihilated by $\mathfrak{m}$ whence a $B$-homomorphism $B/\mathfrak{m} \to B_{\mathfrak{q}}$.) Thus $M_{\mathfrak q}$ and $M^{**}_{\mathfrak q}$ have the same rank, and therefore $M\to M^{**}$ is an isomorphism.
\end{proof}

\section{The explicit isomorphism}

Recall that a ring map $A\rightarrow B$ is a \emph{relative global complete intersection}  if there exists a presentation $A[x_1, \dots, x_n]/(f_1, \dots, f_c) \cong B$, and every nonempty fiber of $\Spec B \rightarrow \Spec A$ has dimension $n-c$ \cite[00SP]{stacks}. 
Note that in this case 
the $f_i$ form a regular sequence \cite[00SV]{stacks}.

We note that a global complete intersection is flat \cite[00SW]{stacks}, and thus syntomic. We will be interested in the situation where $A\rightarrow B$ is furthermore assumed to be a \emph{finite} flat global complete intersection.

\begin{constr}\label{cons:7} Suppose that $A \rightarrow B$ is a finite flat global complete intersection. Choose a presentation
\[
A[x_1, \dots, x_n] \xrightarrow{\pi} B \cong A[x_1, \dots, x_n]/(f_1, \dots, f_n).
\]

Consider the commutative diagram
\begin{equation} \label{eq:diags}
\begin{tikzcd}
A[x_1, \dots, x_n] \otimes_A A[x_1, \dots, x_n] \ar[swap]{d}{\pi \otimes \pi} \ar{r}{m_1} & A[x_1, \dots, x_n] \ar{d}{\pi}\\
B \otimes_A B \ar{r}{m} & B,
\end{tikzcd}
\end{equation}
with $m_1,m$ the obvious multiplication maps.
We note that the elements
\[
\{ f_j \otimes 1 - 1 \otimes f_j \}_{j=1, \dots, n}
\]
are all in $\ker(m_1)$, which is generated by the $x_i\otimes 1- 1\otimes x_i$ for $i=1,\dots,n$, whence we have a relation
\[
f_j \otimes 1 - 1 \otimes f_j = \sum^{n}_{i=1} a_{ij}(x_i \otimes 1 - 1 \otimes x_i). 
\]
Define $\Delta:= (\pi \otimes \pi)(\det(a_{ij})) \in B \otimes_A B$. 
Define also $I:=\ker m$.

\end{constr}

\begin{prop}\label{lem:3.1} The following properties of $\Delta$ hold:
\begin{enumerate}
\item[(a)] The element $\Delta$ is independent of the choice of $a_{ij}$.
\item[(b)] We have an equality of $B \otimes_A B$-ideals:
\[
(\Delta) = \Fit_{B \otimes_A B}I ,
\]
\item[(c)] we have an equality of ideals
\[
(\Delta) = \Ann_{B \otimes_A B}I \qquad  \Ann_{B \otimes_A B}(\Delta) = I. 
\]
\end{enumerate}
\end{prop}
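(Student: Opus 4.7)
The plan is to deduce the proposition directly from Lemma \ref{lem:1.2} by choosing a convenient ambient ring. The element $\Delta$ is defined via a matrix of coefficients in $A[x_1,\dots,x_n]\otimes_A A[x_1,\dots,x_n]$, but that ring does not obviously fit the setup of Lemma \ref{lem:1.2} (there $\mathfrak b\subset\mathfrak a$ with both generated by regular sequences after localizing at primes over $\mathfrak a$). The right intermediate is
\[
R := B\otimes_A A[y_1,\dots,y_n] = B[y_1,\dots,y_n],
\]
a polynomial ring over $B$, which is Noetherian since $B$ is finite over the Noetherian ring $A$. Set
\[
\mathfrak b := (f_1(y),\dots,f_n(y)) \quad\text{and}\quad \mathfrak a := (y_1-x_1,\dots,y_n-x_n)
\]
in $R$. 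The map $\id\otimes\pi:R\to B\otimes_A B$ sending $y_i\mapsto 1\otimes x_i$ identifies $R/\mathfrak b$ with $B\otimes_A B$, and under this identification $\mathfrak a/\mathfrak b$ corresponds precisely to $I=\ker m$.

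Applying $\pi\otimes\id:A[x]\otimes_A A[x]\to R$ to the defining relation $f_j\otimes 1-1\otimes f_j=\sum_i a_{ij}(x_i\otimes 1-1\otimes x_i)$ of Construction \ref{cons:7}, and using $\pi(f_j)=0$, yields
\[
f_j(y) = \sum_i \tilde a_{ij}(y_i-x_i) \quad\text{in } R,
\]
where $\tilde a_{ij}:=(\pi\otimes\id)(a_{ij})$. Composing with $\id\otimes\pi$ shows that $\Delta\in B\otimes_A B$ is exactly the image of $\det(\tilde a_{ij})$ in $R/\mathfrak b$, so it suffices to apply Lemma \ref{lem:1.2} to the triple $(R,\mathfrak a,\mathfrak b)$.

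To verify the hypotheses of Lemma \ref{lem:1.2}, note first that $\mathfrak b\subset\mathfrak a$ is immediate from the expansion above. For the regularity hypothesis, I will show, more strongly, that both sequences are regular in $R$ globally. The sequence $y_1-x_1,\dots,y_n-x_n$ is regular because each quotient $R/(y_1-x_1,\dots,y_k-x_k)\cong B[y_{k+1},\dots,y_n]$ is a polynomial ring in which $y_{k+1}-x_{k+1}$ is a nonzerodivisor. The sequence $f_1(y),\dots,f_n(y)$ is a regular sequence in $A[y_1,\dots,y_n]$ by \cite[00SV]{stacks}, since $A\to B$ is a relative global complete intersection, and this property is preserved by the flat base change $A[y]\to A[y]\otimes_A B=R$.

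Once the hypotheses are in place, the three parts of Lemma \ref{lem:1.2} translate directly into parts (a), (b), (c) of the proposition, using $R/\mathfrak b=B\otimes_A B$ and $\mathfrak a/\mathfrak b = I$; part (a) in particular follows because Lemma \ref{lem:1.2}(a) gives independence over all choices of $(\tilde a_{ij})$ in $R$, which is a fortiori stronger than independence over choices of $(a_{ij})$ in $A[x]\otimes_A A[x]$. The main conceptual obstacle is the preparatory step: selecting the intermediate ring $R$ and showing that its ideals $\mathfrak a,\mathfrak b$ correctly encode $I$, $\Delta$, and $B\otimes_A B$ so that Lemma \ref{lem:1.2} applies; once that is done, the only computation with any content is the regularity of $f_1(y),\dots,f_n(y)$, which reduces immediately to flat base change.
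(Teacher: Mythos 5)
Your proposal is correct and takes essentially the same route as the paper: both pass to the intermediate Noetherian ring $B\otimes_A A[x]\cong B[x]$ (your $R=B[y_1,\dots,y_n]$, with the second set of variables renamed for clarity), identify $B\otimes_A B$ with $R/\mathfrak{b}$ and $I$ with $\mathfrak{a}/\mathfrak{b}$, and then invoke Lemma~\ref{lem:1.2} after checking that $(y_i-x_i)$ and $(f_j(y))$ are regular sequences in $R$, the latter via flat base change from $A[y]$. The variable renaming is a cosmetic improvement; the substance is the same.
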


\begin{proof} 
Consider the ring map 
$$\pi\otimes 1: A[x_1,\dots,x_n]\otimes_A A[x_1,\dots,x_n]\to  B\otimes_A A[x_1,\dots,x_n]\cong B[x_1,\dots,x_n].$$
Since
\[
f_i \otimes 1 - 1 \otimes f_i = \sum^{n}_{i=1} a_{ij}(x_i \otimes 1 - 1 \otimes x_i)
\]
in $ A[x_1,\dots,x_n]\otimes_A A[x_1,\dots,x_n]$, we have that
\[
- 1 \otimes f_i = \sum^{n}_{i=1} a_{ij}(\pi(x_i) \otimes 1 - 1 \otimes x_i)
\]
in $B\otimes_A A[x_1,\dots,x_n]$.

Note that $\Delta$ is the image of $\det(a_{ij})$ under the obvious morphism $B\otimes_A A[x_1,\dots,x_n]\to B\otimes_A B$, and 
that if $\mathfrak a$ is the ideal generated by the 
$\pi(x_i) \otimes 1 - 1 \otimes x_i$ and $\mathfrak b$ the ideal generated by the
$(-1\otimes f_i)$,
then $I$ is $\mathfrak a/\mathfrak b$.
The desired properties will then follow immediately from 
applying Lemma \ref{lem:1.2} to $\mathfrak b=(-1\otimes f_i)\subset (\pi(x_i)\otimes 1-1\otimes x_i)=\mathfrak a$, once we show that the conditions of the Lemma are satisfied. It suffices to show that each is a regular sequence.

We claim that $\{ -1 \otimes f_j \} \subset 
B \otimes_A A[x_1, \dots, x_n]$ is a regular sequence. Indeed, since relative global complete intersections are flat \cite[00SW]{stacks} and regular sequences are preserved under flat morphisms, this follows 
by regularity of the $f_i$ in $A[x_1,\dots,x_n]$ and flatness of $A\rightarrow B$.
It is immediate also that $(\pi(x_i)-x_i)$ forms a regular sequence in $B[x_1,\dots,x_n]$ as well (the $\pi(x_i)$ are just elements $b_i$ of $B$, and $(x_i-b_i)$ is always a regular sequence in $B[x_1,\dots,x_n]$).

Thus, the proposition follows by Lemma~\ref{lem:1.2}.
\end{proof}

Now, retain our setup from Construction~\ref{cons:7}.
There is a canonical map of $A$-modules
\[
\chi:B \otimes_A B \rightarrow \Hom_A(\Hom_A(B, A), B) \qquad \chi(b \otimes c) = (\phi \mapsto \phi(b)c).
\]
Both $B\otimes_A B$ and $\Hom_A(\Hom_A(B, A), B)$ each carry two natural $B$-module structures:
\begin{enumerate}
\item $B$ acts on $B\otimes_A B $ as multiplication on either the left or right factor (i.e., either $a(b\otimes c) = ab\otimes c$ or $a(b\otimes c) = b\otimes ac$).
\item $B$ acts on $\Hom_A(\Hom_A(B, A), B)$ as either pre- or post-composing a homomorphism by multiplication (i.e., either 
$a\phi : \psi \mapsto\phi(a\psi)$ or 
$a\phi : \psi \mapsto a\phi(\psi)$).
\end{enumerate}

\begin{lem}\label{lem:3.2}
$\chi$ induces a $B$-module isomorphism $\Ann_{B\otimes_A B} I \cong \Hom_B(\Hom_A(B, A), B)$.
\end{lem}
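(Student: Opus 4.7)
The plan is to promote $\chi$ to an $A$-linear isomorphism that intertwines both pairs of natural $B$-module structures, and then to identify each of $\Ann_{B\otimes_A B}I$ and $\Hom_B(\Hom_A(B,A),B)$ as the equalizer of its two $B$-module structures.

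First, since $A$ is Noetherian and $B$ is finite flat over $A$, $B$ is a finitely generated projective $A$-module. For any finitely generated projective $A$-module $M$ and any $A$-module $N$, the natural evaluation map $M\otimes_A N\to \Hom_A(\Hom_A(M,A),N)$, $m\otimes n\mapsto (\phi\mapsto \phi(m)n)$, is an isomorphism (check it for $M=A^n$ and then pass to direct summands). Taking $M=N=B$ shows that $\chi$ is an $A$-linear isomorphism.

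Next I would verify that $\chi$ is equivariant for both pairs of $B$-actions described in the paper. For the left-factor action on $B\otimes_A B$ and the pre-composition action on the target, $\chi(ab\otimes c)(\psi)=\psi(ab)c=(a\psi)(b)c=\chi(b\otimes c)(a\psi)$. For the right-factor action and the post-composition action, $\chi(b\otimes ac)(\psi)=\psi(b)ac=a\cdot\chi(b\otimes c)(\psi)$. Thus $\chi$ is a bimodule isomorphism with respect to either choice of $B$-structure on each side.

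Finally, the ideal $I=\ker m$ is generated as a $B\otimes_A B$-ideal by the diagonal differences $\{b\otimes 1-1\otimes b:b\in B\}$ (equivalently, by the generators $x_i\otimes 1-1\otimes x_i$ appearing in Construction~\ref{cons:7}). Therefore $x\in \Ann_{B\otimes_A B}I$ exactly when $(b\otimes 1)\cdot x=(1\otimes b)\cdot x$ for every $b\in B$, i.e.\ when the two $B$-structures on $x$ agree. Symmetrically, $\phi\in\Hom_A(\Hom_A(B,A),B)$ lies in $\Hom_B(\Hom_A(B,A),B)$ exactly when $\phi(a\psi)=a\phi(\psi)$ for all $a,\psi$, i.e.\ when the two $B$-structures on $\phi$ coincide. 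Both sides are thus the equalizer of the two $B$-structures, and the bimodule isomorphism $\chi$ restricts to an isomorphism between these equalizers, yielding the claim.

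I do not expect a serious obstacle: everything is formal once one observes that the annihilator of $I$ is precisely the locus where the two $B$-actions on $B\otimes_A B$ coincide, and symmetrically for $B$-linearity on the Hom side. The one input beyond formalism is the isomorphism $B\otimes_A B\cong \Hom_A(\Hom_A(B,A),B)$, which is exactly where the projectivity of $B$ over $A$ enters.
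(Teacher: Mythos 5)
Your proposal is correct and follows essentially the same route as the paper: establish that $\chi$ is an $A$-linear isomorphism via projectivity of $B$ over $A$, verify that it intertwines the two pairs of $B$-actions, and identify each side as the locus where its two $B$-actions coincide. You are a bit more explicit than the paper in checking both equivariances (left-factor/pre-composition and right-factor/post-composition) rather than asserting the bimodule compatibility; this extra care is warranted, since the restriction to equalizers genuinely requires both.
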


\begin{proof}

We note first that this map is an isomorphism of $A$-modules, for which it suffices to check that it's bijective:
Since $B$ is a projective $A$-module we have that $B$ is canonically isomorphic to $B^{\vee \vee}$ (where we denote by ${}^\vee$ the $A$-module dual), so that we have isomorphisms of $A$-modules
$$
B\otimes_A B \cong (B^{\vee})^{\vee}\otimes_A B \cong \Hom_A(B^\vee,B) = \Hom_A(\Hom_A(B,A),B);
$$
one can check that $\chi$ is simply the composition of these canonical isomorphisms.

It's immediately checked that the morphism $\chi$ is in fact a $B$-bimodule homomorphism for the $B$-module structures of $B\otimes_A B $ and $\Hom_A(\Hom_A(B, A), B)$ given by right multiplication and post-composition. 

Now, we note the following:
\begin{enumerate}
\item The largest submodule of $B\otimes_A B$ where the two $B$-module structures agree is $\Ann_{B\otimes_A B} I$: this follows since an element $r \in B\otimes_A B$ is annihilated by all $a\otimes 1- 1\otimes a$ exactly when $(a\otimes 1)r=(1\otimes a )r$ for all $a$, which occurs exactly when the action of every $a$ on $r$ is the same under the two $B$-module structures.
\item The largest submodule of 
$\Hom_A(\Hom_A(B, A), B)$ where the two $B$-module structures agree is 
$$
\Hom_B(\Hom_A(B, A), B)
\subset
\Hom_A(\Hom_A(B, A), B);
$$
this is clear since the condition of pre- and post-multiplying by elements of $B$ being the same is exactly $B$-linearity.
\end{enumerate}

Putting this together, we have that $\chi$ induces an isomorphism \emph{of $B$-modules}
$$
\chi:
\Ann_{B\otimes _A B} I \to \Hom_B(\Hom_A(B, A), B),
$$
which was our desired claim.
\end{proof}

\begin{thm}\label{lem:3.3} The map $\chi(\Delta):\Hom_A(B,A) \rightarrow B$ is an isomorphism of $B$-modules.
\end{thm}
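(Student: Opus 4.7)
The plan is to combine \propref{lem:3.1} and \lemref{lem:3.2} to identify $\Hom_B(\Hom_A(B,A),B)$ with $B$ as a $B$-module, with $\chi(\Delta)$ corresponding to $1$, and then invoke \lemref{lem:1.4} together with a rank count to conclude that $\chi(\Delta)$ itself is an isomorphism.

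First I would exhibit the identification $\Hom_B(\Hom_A(B,A),B)\cong B$. By \propref{lem:3.1}(c), $\Ann_{B\otimes_A B}(\Delta)=I$, so the $B\otimes_A B$-linear map $r\mapsto r\Delta$ factors through an isomorphism $(B\otimes_A B)/I\isoto(\Delta)$. Composing with the multiplication isomorphism $(B\otimes_A B)/I\isoto B$ produces a $B$-module isomorphism $B\isoto(\Delta)$ sending $1\mapsto\Delta$. Since \propref{lem:3.1}(c) also gives $(\Delta)=\Ann_{B\otimes_A B}I$, \lemref{lem:3.2} supplies a further $B$-module isomorphism $(\Delta)\isoto\Hom_B(\Hom_A(B,A),B)$ sending $\Delta\mapsto\chi(\Delta)$. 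Composing these, I obtain a $B$-module isomorphism $B\isoto\Hom_B(\Hom_A(B,A),B)$ sending $1\mapsto\chi(\Delta)$; in particular $\Hom_B(\Hom_A(B,A),B)$ is free of rank one with generator $\chi(\Delta)$.

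Next I would apply \lemref{lem:1.4} with $M=\Hom_A(B,A)$. Since $A$ is Noetherian and $B$ is finite flat over $A$, $B$ is finite projective over $A$, whence $M$ is finite projective over $A$ and finite over $B$. As $\Hom_B(M,B)$ is free, \lemref{lem:1.4} gives that $M$ is itself free as a $B$-module, of some rank $r$. Comparing $A$-ranks locally on $A$, where $B$ is $A$-free of some positive rank $n$, $M$ has $A$-rank $n$ while $M\cong B^r$ would have $A$-rank $rn$, so $r=1$. Thus $\chi(\Delta)\colon M\to B$ is a $B$-linear map between free $B$-modules of rank one which generates $\Hom_B(M,B)$; after fixing a $B$-basis of $M$ it becomes multiplication by a unit in $B$, and so is an isomorphism. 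The main obstacle is really bookkeeping in the preceding paragraph: one must verify that under the composite identification $B\isoto\Hom_B(\Hom_A(B,A),B)$ the element $\chi(\Delta)$ appears as the image of $1$, since it is only then that the ``generates'' step of the final paragraph delivers $\chi(\Delta)$ as the sought-after unit.
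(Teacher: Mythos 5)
Your proof is correct and follows essentially the same route as the paper's: identify $\Ann_{B\otimes_A B}I=(\Delta)$ with $B$ via \propref{lem:3.1}(c), transfer this across \lemref{lem:3.2} to exhibit $\chi(\Delta)$ as a free generator of $\Hom_B(\Hom_A(B,A),B)$, and then invoke \lemref{lem:1.4} together with a rank count to conclude. The only difference is cosmetic: the paper simply asserts that \lemref{lem:1.4} yields that $\Hom_A(B,A)$ is free of rank one, whereas you make the rank-one step explicit by comparing $A$-ranks, which is a clean way to fill in that small gap.
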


\begin{proof}
Applying Lemma~\ref{lem:3.1}(c) we have that 
$\Ann_{B\otimes_A B} I =\Delta (B\otimes_A B)$, and further that $\Ann_{B\otimes_A B} \Delta(B\otimes_A B) = I$. 
Thus, we have that
$$
\Ann_{B\otimes_A B} I 
=\Delta (B\otimes_A B) 
\cong \Delta(B\otimes_A B)/\Ann_{B\otimes_A B} \Delta
=\Delta(B\otimes_A B)/I
\cong m(\Delta)B.
$$
Applying Lemma~\ref{lem:3.2}, we have then that $\Hom_B(\Hom_A(B,A),B)$ is a free $B$-module with basis $\chi(\Delta)$.
Applying Lemma~\ref{lem:1.4}, this implies that $\Hom_A(B,A)$ is a free $B$-module of rank 1.
We must then have that the $B$-module homomorphism $\chi(\Delta):\Hom_A(B,A)\to B$ is an isomorphism, as desired.
\end{proof}

\bibliographystyle{alphamod}

\let\mathbb=\mathbf

{\small
\bibliography{BEKMTWSPONGE}
}

\parskip 0pt

\end{document}